\chardef\bslash=`\\ 
\def\verbatim{\interlinepenalty\@M \@verbatim
\leftskip\@totalleftmargin\advance\leftskip2pc
\frenchspacing\@vobeyspaces \@xverbatim} \makeatother \hfuzz1pc
\def\dgt@k{\dg@DX=-3 \dg@DY=2 \dg@SIZE=3}
\def\dgt@kk{\dg@DX=3 \dg@DY=-1 \dg@SIZE=3}
\theoremstyle{plain} \newtheorem{theo}{Theorem}[section]
\newtheorem{lemma}[theo]{Lemma}
\newtheorem{propos}[theo]{Proposition}
\theoremstyle{definition} 
\newtheorem{defin}[theo]{Definition} \newtheorem{que}[theo]{Question}
\begin{document}
\title[Fuzzy Prokhorov metric on the set of probability measures]
{Fuzzy Prokhorov metric on the set of probability measures}

\author[D.~Repov\v s]{Du\v{s}an~Repov\v{s}}
\address{Faculty of Mathematics, Physics and Mechanics, and Faculty of Education,
University of Ljubljana, P.O.B. 2964, 1001 Ljubljana,  Slovenia}
\email{dusan.repovs@guest.arnes.si}

\author[A.~Savchenko]{Aleksandr Savchenko}
\address{Kherson Agrarian University, Kherson, Ukraine}
\email{savchenko1960@rambler.ru}

\author[M.~Zarichnyi]{Mykhailo Zarichnyi}
\address{Lviv National University,
79000 Lviv, Ukraine and Institute of Mathematics, Physics and Mechanics, P.O.B. 2964, 1001 Ljubljana,  Slovenia}

\email{mzar@litech.lviv.ua}
\thanks{}
\subjclass[2010]{54E70, 54A40, 60B05}

\keywords{Fuzzy metric, Prokhorov metric, probability measure, extension of fuzzy metrics}


\begin{abstract} We introduce a fuzzy metric on the set of probability  measures  on a fuzzy metric space.
The construction is an analogue, in the realm of fuzzy metric spaces, of the Prokhorov metric
on the set of probability measures on  compact metric spaces.

\end{abstract}
\date{\today}
\maketitle
\section{Introduction}

The notion of fuzzy metric space first appeared in
\cite{KM} and it was later modified in \cite{GV}. The version from
\cite{GV}, despite being more restrictive, determines the class of
spaces that are closely connected with the class of metrizable
topological spaces. This notion was widely used in various
papers 
devoted to  fuzzy topology and it has found numerous applications -- 
in particular  to color image processing (see e.g. \cite{GMS} and
the references therein).

Different notions and results of the theory of metric spaces have
their analogues for  fuzzy metric spaces. At the same time,
there are phenomena in the realm of fuzzy metric spaces that have
no immediate analogue for metric spaces. The completeness and
 existence of non-completable fuzzy metric spaces can serve as
an example. This demonstrates that the fuzzy metric seems to be a
structure that leads to a theory which appears to be richer 
than that of metric spaces.

In the theory of fuzzy metric spaces, there are analogues of various constructions from 
theory of metric spaces. In particular, a fuzzy Hausdorff metric was defined in \cite{RR}. The fuzzy metrics on (finite and countable) powers and $G$-symmetric powers were defined, in particular
in \cite{RN,Sav}.

In this paper, we consider a fuzzy analogue of the Prokhorov metric defined on the set of all probability measures of a compact fuzzy metric space. We prove that this metric induces the weak* convergence of probability measures on compact metrizable space. Our aim of studying the space of probability measures on fuzzy metric spaces is twofold. First, we use the property of spaces of probability measures to be absolute extensors and this allows us to solve the problem of (continuous) extension of fuzzy metric defined on a closed subspace. 

Note that the problem of extension of structures is of a fundamental character and
it arises in various
areas of mathematics: differential equations (extensions of solutions), functional analysis (extensions of functionals), topology (extensions of continuous maps), etc. 
The classical problem of extensions of metrics, first solved by Hausdorff in the 1930's, is also of this type.

Our theorem on continuous extension of fuzzy metrics relies on the extensional properties of the spaces of probability measures. These properties are known to hold
only for the class of metrizable spaces. Thus the reason why we are dealing with the fuzzy metric spaces in the sense of \cite{GV} is the fact that the topology induced by any fuzzy metric in this sense is metrizable (see \cite{GR}).

Our second reason to investigate the spaces of probability measures is their applicability
to the theory of probabilistic systems (see e.g. \cite{dVR, B,  vBW}). In the fuzzy metric setting, we come to the problem of fuzzy metrization of the sets of probability measures.
Note also that a fuzzy ultrametric on the set of probability measures with compact supports on fuzzy ultrametric spaces was defined in \cite{SaZa}.

One of the main results is that the construction of the Prokhorov metric determines a functor in the category of fuzzy metric spaces and nonexpanding maps. As was remarked above, we also apply the construction of Prokhorov metric to the problem of extensions of fuzzy metrics.

\section{Preliminaries}

\subsection{Fuzzy metric spaces}
The notion of fuzzy metric space, in one of its forms, is
introduced by  Kramosil and Michalek \cite{KM}. In the present
paper we use the version of this concept given by George and Veeramani
\cite{GV}. We start with some necessary definitions.
\begin{defin} A binary operation $* \colon [0, 1]\times[0, 1] \to [0, 1]$ is a continuous
t-norm if $*$  satisfies the following conditions:
\begin{enumerate}
\item[(i)] $*$ is commutative and associative;
\item[(ii)] $*$ is continuous;
\item[(iii)] $a *1 = a$ for all $a \in [0, 1]$; and
\item[(iv)] $a * b \le c * d$ whenever $a\le c$ and $b \le d$, and $a, b, c, d \in[0, 1]$.
\end{enumerate}
\end{defin}

The following are examples of t-norms: $a*b=ab$; $a*b=\min\{a,b\}$.

\begin{defin}\label{o5} A 3-tuple $(X,M, *)$ is said to be a fuzzy metric space
if $X$ is an arbitrary set, $*$ is a continuous t-norm and $M$ is a fuzzy set on
$X^2 \times (0,\infty)$ satisfying the following conditions for all $x, y, z \in X$ and $s, t > 0$:
\begin{enumerate}
\item[(i)] $M(x, y, t) > 0$,
\item[(ii)] $M(x, y, t) = 1$ if and only if $x = y$,
\item[(iii)] $M(x, y, t) = M(y, x, t)$,
\item[(iv)] $M(x, y, t) \ast M(y, z, s) \le M(x, z, t + s)$,
\item[(v)] the function $M(x, y, -) \colon (0,\infty) \to (0, 1]$ is continuous.
\end{enumerate}
\end{defin}

It was proved in \cite{GV} that in a fuzzy metric space $X$, the function  $M(x, y, -)$ is non-decreasing for all $x, y \in X$.
The following notion was introduced in \cite{GV} (see Definition 2.6 therein).

\begin{defin}
Let $(X,M, *)$ be a fuzzy metric space and let $r \in (0, 1)$,
$t > 0$ and $x \in X$. The set
$$B(x, r, t) = \{y \in X \mid M(x, y, t) > 1 - r\}$$
is called the {\em open ball} with center $x$ and radius $r$ with respect to $t$.
\end{defin}

The family of all open balls in a fuzzy metric space $(X,M, *)$ forms a base of a topology in $X$; this topology is denoted by $\tau_M$
and is known to be metrizable (see \cite{GV}). In the sequel, if we speak on a fuzzy metric on a topological space, we assume that this metric generates the  initial topology on the space.

Note that $B(x,r,t_1)\subset B(x,r,t_2)$, whenever $t_1\le t_2$.

If $(X,M, *)$ is a fuzzy metric space and $Y\subset X$, then, clearly, $$M_Y=M|(Y\times Y\times(0,\infty))\colon Y\times Y\times(0,\infty)\to [0,1]$$
is a fuzzy metric on the set $Y$. We say that the fuzzy metric $M_Y$ is {\em induced} on $Y$ by $M$.

If $(X,M, *)$ is a fuzzy metric space, then the family $$\{U_n=\{(x,y)\in X\times X\mid M(x,y,\frac{1}{n})>1-\frac{1}{n}\}\mid n\in\mathbb N\}$$ is a base of a uniform structure on $X$. This uniform structure is known to generate the topology $\tau_M$ on $X$.

Let $(X,M,*)$ and $(X',M',*)$ be fuzzy metric spaces. A map $f\colon X\to X'$ is called {\em nonexpanding} if $M'(f(x),f(y),t)\ge M(x,y,t)$, for all
$x,y\in X$ and $t>0$. For our purposes, it is sufficient to consider the class of fuzzy metric spaces with the same fixed norm (e.g.  $*$).

The fuzzy metric spaces (with the norm $*$) and nonexpanding maps form a category, which we denote by $\mathcal{FMS}(*)$. By $\mathcal{CFMS}(*)$ we denote its subcategory consisting of compact fuzzy metric spaces.

\subsection{Spaces of probability measures} Let $X$ be a metrizable space.  We denote
the space of probability measures with compact
support  in $X$ by $P(X)$ (see  e.g.  \cite{Par} for the necessary definitions that concern probability measures). Recall that the {\em support} of a
probability measure $\mu\in P(X)$ is the minimal (with respect to the inclusion) closed set $\mathrm{supp}(\mu)$ such that
$\mu(X\setminus \mathrm{supp}(\mu))=0$. For any $x\in X$, by $\delta_x$ we denote the Dirac measure concentrated at $x$.

Any probability measure $\mu$ of finite support can be represented as follows: $\mu=\Sigma_{i=1}^n\alpha_i\delta_{x_i}$, where
$\alpha_1,\dots,\alpha_n\ge0$ and $\Sigma_{i=1}^n\alpha_i=1$. By $P_\omega(X)$ we denote the set of
all probability measures with finite supports in $X$.

The set $P(X)$ is endowed with the weak* topology, i.e., the topology induced by the weak* convergence. A sequence $(\mu_i)$ in $P(X)$ weakly* converges to $\mu\in P(X)$ if $\lim_{i\to\infty}\int_X\varphi d\mu_i= \int_X\varphi d\mu$, for every $\varphi\in C(X)$. Equivalently, $\overline{\lim}_{i\to\infty}\mu_i(C)\le \mu(C)$, for every closed subset $C$ of $X$.

If $X$ is a compact Hausdorff space, then there exists a natural map $\psi_X\colon P^2(X)=P(P(X))$ defined by the formula:
$$\int_{X}\varphi d\psi_X(M)= \int_{P(X)}\bar\varphi d M,$$
where $\bar\varphi\colon P(X)\to\mathbb R$ is defined by the formula:  $\bar\varphi(\mu)= \int_{X}\varphi d\mu$.

Let $X,Y$ be metrizable spaces. Every continuous map $f\colon X\to Y$  generates a map $P(f)\colon P(X)\to P(Y)$ defined by the following condition: $P(f)(\mu)(A)=\mu (f^{-1}(A))$, for every Borel subset of $Y$. It is known that the map $P(f)$ is also continuous.

\section{\L ukasiewicz norm and fuzzy metric on the set of probability measures}

Recall that the \L ukasiewicz t-norm  is defined by the formula $$x\ast y=\max\{x+y-1,0\},\ x,y\in[0,1].$$ In the sequel, $\ast$ stands for the \L ukasiewicz t-norm.

In the sequel, let $(X,M,\ast)$ be a compact fuzzy metric space. For every $A\subset X$, $r\in (0,1)$, $t\in(0,\infty)$ define:
$$A^{r,t}=\cup\{B(x,r,t)\mid x\in A\}\subset X.$$

\begin{lemma} For every $A\subset X$, every $r,\varrho\in(0,1)$ such that $r\ast\varrho\in(0,1)$, and every $t,s\in (0,\infty)$ we have:
$$(A^{r,t})^{\varrho,s}\subset A^{r+\varrho,t+s}.$$

\end{lemma}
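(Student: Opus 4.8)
The plan is to derive the inclusion directly from the fuzzy triangle inequality, condition (iv) of Definition~\ref{o5}, together with the explicit shape of the \L ukasiewicz norm. First I would peel off the two layers of the neighbourhood operation. Fix an arbitrary $z\in(A^{r,t})^{\varrho,s}$. By the outer application of the construction there is some $y\in A^{r,t}$ with $z\in B(y,\varrho,s)$, i.e.\ $M(y,z,s)>1-\varrho$, and by the inner application there is some $x\in A$ with $y\in B(x,r,t)$, i.e.\ $M(x,y,t)>1-r$. It then suffices to prove that this same $x$ satisfies $z\in B(x,r+\varrho,t+s)$, that is $M(x,z,t+s)>1-(r+\varrho)$, for then $z\in A^{r+\varrho,t+s}$ and, $z$ being arbitrary, the desired inclusion follows.

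For the key estimate I would apply condition (iv) with the decomposition $t+s$ of the time parameter to get $M(x,z,t+s)\ge M(x,y,t)\ast M(y,z,s)$, and then substitute the formula for the \L ukasiewicz norm:
$$M(x,y,t)\ast M(y,z,s)=\max\{M(x,y,t)+M(y,z,s)-1,\,0\}.$$
Inserting the two strict inequalities $M(x,y,t)>1-r$ and $M(y,z,s)>1-\varrho$ gives $M(x,y,t)+M(y,z,s)-1>1-(r+\varrho)$. The hypothesis linking $r$ and $\varrho$ is used precisely here: it guarantees that $1-(r+\varrho)>0$ (equivalently, that the combined threshold $(1-r)\ast(1-\varrho)$ is positive), so that the first argument of the maximum is itself positive and therefore realises the maximum. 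Consequently $M(x,y,t)\ast M(y,z,s)>1-(r+\varrho)$, and the triangle estimate yields $M(x,z,t+s)>1-(r+\varrho)$, as required.

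The only delicate bookkeeping---which I regard as the main, though minor, obstacle---is the interplay between the strict inequalities defining ball membership and the truncation built into the maximum: one must verify that the first argument of the $\max$ is the active one (this is exactly the content of the radius hypothesis) and that strictness survives the passage through the sum, so that the concluding membership $z\in B(x,r+\varrho,t+s)$ is again strict. I would also record that in the complementary range $r+\varrho\ge 1$ the statement is vacuously true, since then $1-(r+\varrho)\le 0<M(x,z,t+s)$ forces $B(x,r+\varrho,t+s)=X$, whence $A^{r+\varrho,t+s}=X$ and the inclusion is automatic. With $x\in A$ fixed throughout, $B(x,r+\varrho,t+s)\subset A^{r+\varrho,t+s}$ holds by definition, which completes the argument.
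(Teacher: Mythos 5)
Your proof is correct and follows essentially the same route as the paper's: unfold the two layers of the neighbourhood operation to obtain a chain $x\in A$, $y$, $z$ with $M(x,y,t)>1-r$ and $M(y,z,s)>1-\varrho$, then apply the triangle inequality (iv) with the \L ukasiewicz norm to conclude $M(x,z,t+s)>1-(r+\varrho)$. Your extra bookkeeping about which argument of the $\max$ is active and the vacuous case $r+\varrho\ge 1$ is harmless but not needed, since $\max\{a,0\}\ge a$ already gives the strict lower bound in every case.
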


\begin{proof} Let $x\in (A^{r,t})^{\varrho,s}$, then there exist $y,z\in X$ such that $z\in A$, $M(z,y,t)>1-r$, $M(y,x,s)>1-\varrho$. Whence \begin{align*}M(x,z,t+s)&\ge M(x,y,s)\ast M(y,z,t) \\ &=\max\{M(x,y,s)+M(y,z,t)+1,0\}  >\max\{1-r+1-\varrho-1,0\}\\ &= 1-r-\varrho,\end{align*} i.e. $x\in A^{r+\varrho,t+s}$.
\end{proof}
The following is obvious.

\begin{lemma} If $t_1\le t_2$, then, for every $A\subset X$, we have  $A^{r,t_1}\subset A^{r,t_2}$.
\end{lemma}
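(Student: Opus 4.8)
The statement is an immediate consequence of the monotonicity of $M$ in its time variable, so the plan is simply to unwind the definition of $A^{r,t}$ and reduce matters to the inclusion of single balls that was already recorded in the excerpt. There is no genuine difficulty here; the work is entirely in assembling the pointwise facts into a statement about unions.

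First I would fix $x,y\in X$ and invoke the fact, cited above from \cite{GV}, that the function $M(x,y,-)\colon(0,\infty)\to(0,1]$ is non-decreasing. Consequently, for $t_1\le t_2$ and any $y\in B(x,r,t_1)$ we have $M(x,y,t_2)\ge M(x,y,t_1)>1-r$, so that $y\in B(x,r,t_2)$; this is exactly the inclusion $B(x,r,t_1)\subset B(x,r,t_2)$ noted in the remark preceding the statement. Thus the single-center version of the claim is already available.

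Next I would take the union of these inclusions over all centers $x\in A$. Since taking a union over a fixed index set is monotone with respect to inclusion of the summands, we obtain
$$A^{r,t_1}=\bigcup_{x\in A}B(x,r,t_1)\subset\bigcup_{x\in A}B(x,r,t_2)=A^{r,t_2},$$
which is precisely the desired conclusion. The only ingredient beyond the definitions is the monotonicity of $M(x,y,-)$, and the sole remaining step---that a union of a family of sets grows when each member grows---is immediate, which is why the statement is labeled obvious.
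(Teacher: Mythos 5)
Your argument is correct and is exactly the one the paper intends: the paper states this lemma without proof (``The following is obvious''), relying precisely on the previously recorded facts that $M(x,y,-)$ is non-decreasing and hence $B(x,r,t_1)\subset B(x,r,t_2)$ for $t_1\le t_2$, after which the inclusion of the unions over $x\in A$ is immediate. Nothing is missing and nothing differs in substance from the paper's (implicit) reasoning.
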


\begin{lemma} Let $t_0\in(0,\infty)$. For every $\varepsilon>0$, there exists $\eta>0$ such that, for every $x,y\in X$, if $|t-t_0|<\eta$, then $|M(x,y,t)-M(x,y,t_0)|<\varepsilon$.  \end{lemma}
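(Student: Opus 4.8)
The plan is to read the statement as a \emph{uniform continuity} assertion: the family $\{M(x,y,-)\mid x,y\in X\}$ is equicontinuous at $t_0$, uniformly in the pair $(x,y)$. Since $X$ is compact, so is $X\times X$, and the natural route is to establish that $M$ is jointly continuous on $X\times X\times(0,\infty)$ and then invoke the Heine--Cantor theorem on a suitable compact box around $t_0$. Concretely, I would fix $t_0$ and choose $\eta_0\in(0,t_0)$, so that the interval $I=[t_0-\eta_0,t_0+\eta_0]$ lies in $(0,\infty)$; then $K=X\times X\times I$ is compact, and uniform continuity of $M|_K$ produces a single $\eta\le\eta_0$ working simultaneously for all $(x,y)$, which is exactly the claim.

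The main obstacle is therefore the joint continuity of $M$, which is not among the facts recalled in the preliminaries (only monotonicity and the separate continuity of axiom (v) are stated), so I would derive it from the axioms. Fix $(x_0,y_0,t_0)$ and let $(x,y,t)\to(x_0,y_0,t_0)$. By definition of the topology $\tau_M$, convergence $x\to x_0$ means $M(x,x_0,s)\to1$ for every fixed $s>0$, and likewise for $y\to y_0$. For the lower estimate, the triangle inequality (iv) gives, for small fixed $s$ with $t-2s>0$,
$$M(x,y,t)\ge M(x,x_0,s)\ast M(x_0,y_0,t-2s)\ast M(y_0,y,s).$$
Passing to the limit and using continuity of $\ast$ together with $1\ast a=a$, the two outer factors tend to $1$, so $\liminf M(x,y,t)\ge M(x_0,y_0,t_0-2s)$; letting $s\to0^+$ and using axiom (v) yields $\liminf M(x,y,t)\ge M(x_0,y_0,t_0)$. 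For the upper estimate I would run the symmetric inequality
$$M(x_0,y_0,t+2s)\ge M(x_0,x,s)\ast M(x,y,t)\ast M(y,y_0,s),$$
whose right-hand side has the same $\limsup$ as $M(x,y,t)$ (the outer factors again tend to $1$), giving $\limsup M(x,y,t)\le M(x_0,y_0,t_0+2s)$ and, as $s\to0^+$, $\limsup M(x,y,t)\le M(x_0,y_0,t_0)$. The two estimates together give $M(x,y,t)\to M(x_0,y_0,t_0)$. I would emphasize that this argument uses only the identity property $1\ast a=a$ and continuity of $\ast$, so it is not special to the \L ukasiewicz norm.

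With joint continuity in hand the conclusion is routine: $M$ is uniformly continuous on the compact metrizable set $K=X\times X\times I$, so given $\varepsilon>0$ there is $\eta\in(0,\eta_0]$ such that any two points of $K$ within distance $\eta$ have $M$-values differing by less than $\varepsilon$; applying this to $(x,y,t)$ and $(x,y,t_0)$ whenever $|t-t_0|<\eta$ finishes the proof. The one point demanding care is the joint-continuity step (keeping the auxiliary shift $2s$ strictly inside $(0,\infty)$, and correctly interchanging the limits in $(x,y,t)$ and in $s$, for which working with sequences is legitimate since the domain is metrizable); everything after that is a direct application of compactness.
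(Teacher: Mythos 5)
Your proof follows essentially the same route as the paper's: the paper also fixes a compact interval $J$ around $t_0$ and deduces the claim from uniform continuity of $M$ on the compact set $X\times X\times J$. The only difference is that the paper simply cites the joint continuity of $M$ from Proposition~1 of the Rodr\'\i guez-L\'opez--Romaguera paper, whereas you derive it from the axioms via the two-sided triangle-inequality estimates; your derivation is correct, so this just makes the argument self-contained.
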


\begin{proof} Let $J$ be a closed interval such that $t_0$ is its interior point. Since the map $M\colon X\times X\times (0,\infty)$ is continuous (see \cite[Proposition 1]{RR}), its restriction onto $X\times X\times J$ is uniformly continuous and the result follows.
\end{proof}
\begin{lemma}\label{l:unif} Let $t_0\in (0,\infty)$, $r_0\in (0,1)$. For every $\varepsilon>0$, there exists $\eta>0$ such that, $B(x,r_0+\varepsilon,t)\supset B(x,r_0,t_0)$, whenever $x\in X$ and $|t-t_0|<\eta$.
\end{lemma}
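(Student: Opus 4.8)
The plan is to reduce the statement directly to the preceding lemma, which supplies a modulus of continuity for $M$ in the time variable that is \emph{uniform} over all pairs $(x,y)\in X\times X$. Unwinding the definition of the open balls, the desired inclusion $B(x,r_0,t_0)\subset B(x,r_0+\varepsilon,t)$ is exactly the implication
\[
M(x,y,t_0)>1-r_0 \quad\Longrightarrow\quad M(x,y,t)>1-(r_0+\varepsilon),
\]
so everything hinges on controlling how far $M(x,y,\cdot)$ can drop when its time argument is moved from $t_0$ to a nearby value $t$.

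First I would invoke the previous lemma with the given $\varepsilon$ to obtain $\eta>0$ such that $|M(x,y,t)-M(x,y,t_0)|<\varepsilon$ for all $x,y\in X$ whenever $|t-t_0|<\eta$. The essential feature is that this single $\eta$ works simultaneously for every $x$ and every $y$; this uniformity is precisely what lets the conclusion hold for all centers $x\in X$ at once, rather than for a single fixed center.

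Next, fixing any $x\in X$ and any $t$ with $|t-t_0|<\eta$, I would take an arbitrary $y\in B(x,r_0,t_0)$, so that $M(x,y,t_0)>1-r_0$. Combining this with the estimate from the previous step gives
\[
M(x,y,t)>M(x,y,t_0)-\varepsilon>(1-r_0)-\varepsilon=1-(r_0+\varepsilon),
\]
which says exactly that $y\in B(x,r_0+\varepsilon,t)$. Since $y$ was arbitrary, this yields the required inclusion.

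I do not anticipate any serious obstacle here: the content of the statement is carried entirely by the uniform-in-$(x,y)$ continuity established in the preceding lemma, and the remainder is bookkeeping with the definition of the open ball. The only minor point to dispatch is the degenerate case $r_0+\varepsilon\ge 1$, in which $1-(r_0+\varepsilon)\le 0$ and the target set is all of $X$ (since $M>0$ by axiom (i)), making the inclusion trivial; hence one may harmlessly assume $r_0+\varepsilon<1$.
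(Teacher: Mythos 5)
Your proof is correct and is exactly the argument the paper intends: the paper's own proof of this lemma is the single line ``This is a consequence of the previous lemma,'' and you have simply filled in the routine details (applying the uniform-in-$(x,y)$ continuity estimate and unwinding the definition of the open ball). No substantive difference in approach.
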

\begin{proof} This is a consequence of the previous lemma.
\end{proof}

Define the function $\hat M\colon P(X)\times P(X)\times (0,\infty)\to[0,1]$ by the formula
\begin{align*} \hat M(\mu,\nu, t)=&1-\inf\{r\in(0,1)\mid \mu(A)\le\nu(A^{r,t})+r \text{ and } \nu(A)\le\mu(A^{r,t})+r\\ &\text{for every Borel subset } A\subset X\}. \end{align*}
We will see later that the function $\hat M$ is well defined.

\begin{theo} The function $\hat M$ is a fuzzy metric on the set $P(X)$.
\end{theo}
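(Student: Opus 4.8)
The plan is to verify, for $\hat M$, the five axioms of Definition \ref{o5}, after first checking that the infimum defining $\hat M$ is taken over a nonempty set. For well-definedness and the positivity axiom (i) I would use compactness: for each fixed $t$ the continuous function $M(\cdot,\cdot,t)$ attains a positive minimum $m_t>0$ on $X\times X$, so for every $r\in(1-m_t,1)$ one has $B(x,r,t)=X$ and hence $A^{r,t}=X$ for every nonempty Borel $A$; the two defining inequalities then hold trivially, so every such $r$ is admissible. This shows the admissible set is nonempty and its infimum lies in $[0,1)$, whence $\hat M\in(0,1]$, giving both well-definedness and (i). Symmetry (iii) is immediate, since the pair of conditions defining the admissible set is invariant under interchanging $\mu$ and $\nu$. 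For the easy half of (ii), if $\mu=\nu$ then $A\subset A^{r,t}$ (because $M(x,x,t)=1$) forces $\mu(A)\le\mu(A^{r,t})\le\mu(A^{r,t})+r$ for every $r$, so the infimum is $0$ and $\hat M(\mu,\mu,t)=1$.

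The converse half of (ii) is where I expect the main obstacle, since it is the only place requiring genuine topological and measure-theoretic input. Suppose $\hat M(\mu,\nu,t)=1$, so that for every $r>0$ and every Borel $A$ we have $\mu(A)\le\nu(A^{r,t})+r$ and, symmetrically, $\nu(A)\le\mu(A^{r,t})+r$. The key geometric fact I would establish is that $\bigcap_{r>0}C^{r,t}=C$ for every closed $C$: the inclusion $\supseteq$ is clear, while for $\subseteq$ a point $y$ lying in every $C^{r,t}$ yields points $x_r\in C$ with $M(x_r,y,t)>1-r$, and compactness produces a convergent subsequence whose limit $x_*\in C$ satisfies $M(x_*,y,t)=1$ by continuity of $M$, forcing $y=x_*\in C$ by axiom (ii) of Definition \ref{o5} for $M$. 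Since $C^{r,t}$ decreases to $C$ as $r\downarrow0$, continuity of $\nu$ from above applied to $\mu(C)\le\nu(C^{r,t})+r$ gives $\mu(C)\le\nu(C)$, and symmetrically $\nu(C)\le\mu(C)$; as a finite Borel measure on a metric space is determined by its values on closed sets, this yields $\mu=\nu$.

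For the triangle inequality (iv) I would exploit the first lemma of this section (the inclusion $(A^{r,t})^{\varrho,s}\subset A^{r+\varrho,t+s}$). Writing $\hat M(\mu,\nu,t)=1-p$ and $\hat M(\nu,\lambda,s)=1-q$, the \L ukasiewicz product equals $\max\{1-p-q,0\}$, so only the case $p+q<1$ is nontrivial. I first note that the admissible sets are upward closed in $(0,1)$, so I may pick admissible radii $r>p$ and $\varrho>q$ with $r+\varrho<1$ approximating $p,q$ arbitrarily well. Chaining the defining inequalities then gives $\mu(A)\le\nu(A^{r,t})+r\le\lambda((A^{r,t})^{\varrho,s})+\varrho+r\le\lambda(A^{r+\varrho,t+s})+(r+\varrho)$, and symmetrically $\lambda(A)\le\mu(A^{r+\varrho,t+s})+(r+\varrho)$, using the inclusion above (valid precisely for $r+\varrho<1$) together with monotonicity of $\lambda$. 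Hence $r+\varrho$ is admissible for $(\mu,\lambda)$ at time $t+s$, so that infimum is at most $r+\varrho$; letting $r\downarrow p$ and $\varrho\downarrow q$ gives $\hat M(\mu,\lambda,t+s)\ge 1-p-q$, which is the desired inequality.

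Finally, for the continuity axiom (v) I would use the uniform continuity in $t$ established above (the lemma asserting that, near $t_0$, $|M(x,y,t)-M(x,y,t_0)|$ is small uniformly in $x,y$). Fixing $t_0$ and $\eps>0$ and choosing $\eta$ so that $|M(x,y,t)-M(x,y,t_0)|<\eps$ for all $x,y$ whenever $|t-t_0|<\eta$, I obtain the two inclusions $B(x,r,t)\subset B(x,r+\eps,t_0)$ and $B(x,r,t_0)\subset B(x,r+\eps,t)$, hence $A^{r,t}\subset A^{r+\eps,t_0}$ and $A^{r,t_0}\subset A^{r+\eps,t}$. Substituting these into the defining inequalities shows that any radius admissible at $t_0$ becomes, after enlarging by $\eps$, admissible at $t$, and conversely; writing $g(t)$ for the infimum, this gives $|g(t)-g(t_0)|\le\eps$ whenever $|t-t_0|<\eta$, so $g$ and therefore $\hat M(\mu,\nu,-)=1-g$ is continuous. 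Collecting these verifications establishes that $\hat M$ satisfies all the axioms of Definition \ref{o5}.
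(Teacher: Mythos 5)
Your proof is correct and follows the same overall plan as the paper: verify the axioms of Definition \ref{o5} directly from the definition of $\hat M$, using the inclusion $(A^{r,t})^{\varrho,s}\subset A^{r+\varrho,t+s}$ for the triangle inequality and the uniform behaviour of $M$ in the $t$-variable for continuity. Where you genuinely diverge is in the separation axiom and in axiom (v), and in both places your route is arguably tighter. For the implication $\hat M(\mu,\nu,t)=1\Rightarrow\mu=\nu$ the paper simply asserts that the inequalities $\mu(A)\le\nu(A^{r,t})+r$ and $\nu(A)\le\mu(A^{r,t})+r$ for all $r$ yield $\mu(A)=\nu(A)$; this is not immediate, since $A^{r,t}$ strictly contains $A$. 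Your argument --- establishing $\bigcap_{r>0}C^{r,t}=C$ for closed $C$ via compactness and continuity of $M$, then invoking continuity of measures from above and the fact that a finite Borel measure on a metric space is determined by its values on closed sets --- supplies exactly the missing step. For (v) the paper first proves monotonicity in $t$ and then handles left- and right-continuity separately via Lemma \ref{l:unif}, whereas you obtain a symmetric two-sided estimate $|g(t)-g(t_0)|\le\eps$ directly from the uniform continuity of $M(x,y,\cdot)$ near $t_0$; both are valid, yours is shorter. The remaining verifications (well-definedness via compactness, symmetry, the easy half of (ii), and the chaining argument for (iv) with the observation that the admissible sets are upward closed so one may let $r\downarrow p$, $\varrho\downarrow q$) match the paper's in substance.
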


\begin{proof} We are going to verify the properties from the definition of the fuzzy metric.

First, remark that $\hat M(\mu,\nu,t)>0$. Indeed, since the set $\mathrm{supp}(\mu)\cup \mathrm{supp}(\nu)$ is compact, there exists $r>0$ such that, for any $x\in  \mathrm{supp}(\mu)\cup \mathrm{supp}(\nu)$, we have $B(x,r,t)\supset \mathrm{supp}(\mu)\cup \mathrm{supp}(\nu)$ (see \cite{RR}). Then, for any Borel set $A\subset X$, such that $\mu(A)>0$, we obtain $A^{r,t}\supset \mathrm{supp}(\nu)$ and therefore $\nu ( A^{r,t})=1$. Then clearly, $\hat M(\mu,\nu,t)>1-r>0$. Note that at the same time we have proven that the function $\hat M$ is well defined.

Clearly, $\hat M(\mu,\mu,t)=1$. Let now $\hat M(\mu,\nu,t)=1$. Then it is easy to see that, for any $r\in(0,1)$ and $t>0$, we see that $$\mu(A)\le \nu(A^{r,t})+r,\ \nu(A)\le \mu(A^{r,t})+r,$$ whence $\mu(A)=\nu(A)$, for any Borel $A$.  From the elementary properties of probability measures it follows that  $\mu(A)=\nu(A)$.

Clearly, $\hat M(\mu,\nu,t)=\hat M(\nu,\mu,t)$.

Let $\mu,\nu,\tau\in P(X)$. Suppose that $\hat M(\mu,\nu,t)>a$, $\hat M(\nu,\tau,s)>b$ for some $a,b\in(0,1)$. Then there exist $r\in(0,1-a)$, $\varrho\in(0,1-b)$ such that
\begin{align*} \mu(A^{r,t})\le\nu(A^{r,t})+r,\ \nu(A^{r,t})\le\mu(A^{r,t})+r,\\  \nu(A^{\varrho,s})\le\tau(A^{\varrho,s})+\varrho,\ \tau(A^{\varrho,s})\le\mu(A^{\varrho,s})+\varrho,\end{align*} for every Borel subset $A\subset X$.

Then $$\mu(A)\le \nu(A^{r,t})+r\le \tau((A^{r,t})^{\varrho,s})+r+\varrho\le \tau(A^{r+\varrho,t+s})+r+\varrho,$$
$$\tau(A)\le \nu(A^{r,t})+r\le \mu((A^{r,t})^{\varrho,s})+r+\varrho\le \mu(A^{r+\varrho,t+s})+r+\varrho,$$
hence it follows that $$\hat M(\mu,\tau, r+s)\ge 1-(r+\varrho)=1-(r+\varrho)=1-r+1-\varrho -1=a+b-1=a\ast b.$$ This proves property (iv) from the definition of the fuzzy metric.

It remains to prove that, for every $\mu,\nu\in P(X)$, the map $$t\mapsto \hat M(\mu,\nu,t)\colon (0,\infty)\to[0,1]$$ is continuous. First note that this map is nondecreasing. Indeed, suppose that $\hat M(\mu,\nu,t_1)=1-r_1$ and $t_1\le t_2$. Then, for every $r>r_1$, and every Borel subset $A$ of $X$, we have $$\mu(A)\le \nu(A^{r,t_1})+r\le \nu(A^{r,t_1})+r,\ \nu(A)\le \mu(A^{r,t_1})+r\le \mu(A^{r,t_1})+r,$$ whence $\hat M(\mu,\nu,t_2)=1-\inf\{r\mid r>r_1\}\ge 1-r_1=\hat M(\mu,\nu,t_1)$.

Suppose now that $\hat M(\mu,\nu,t_0)>1-r_0$. Then there is $r<r_0$ such that $$\mu(A)\le \nu(A^{r,t_0})+r,\ \nu(A)\le \mu(A^{r,t_0})+r,$$
for every Borel subset $A$ of $X$.

Let $\varepsilon>0$ be such that $r+\varepsilon<r_0$. There is $\eta>0$ such that, for every $t\in(t_0-\eta,t_0)$ and every $x\in X$,  we have $B(x,r+\varepsilon,t)\supset B(x,r,t_0)$. Then, for every Borel subset $A$ of $X$, we have $$\mu(A)\le \nu(A^{r+\varepsilon,t})+r+\varepsilon,\ \nu(A)\le \mu(A^{r+\varepsilon,t_0})+r+\varepsilon,$$
whence $$1-\hat M(\mu,\nu,t)\ge 1- (r+\varepsilon)> 1-r_0.$$ This proves the left-continuity of the function $M(\mu,\nu,-)$.

To prove the right-continuity at $t_0$, let $\varepsilon>0$.  By Lemma \ref{l:unif}, there exists $\eta>0$ such that, for every Borel subset $A$ of $X$ and every $r\in(0,1)$, we have $A^{r+\varepsilon,t_0}\supset A^{r,t}$, whenever $|t_0-t|<\eta$.

Let $t\in (t_0,t_0+\eta)$. Suppose that $$\hat M(\mu,\nu,t_0)=1-r_0,\ \hat M(\mu,\nu,t)=1-r.$$  By the definition of $\hat M$, there exists $r'\in(r,r+\varepsilon)$ such that for every Borel subset $A$ of $X$, we have $$\mu(A)\le \nu(A^{r',t})+r',\ \nu(A)\le \mu(A^{r',t})+r'.$$ Then $$\mu(A)\le \nu(A^{r'+\varepsilon,t})+r',\ \nu(A)\le \mu(A^{r'+\varepsilon,t})+r'$$ and therefore $r_0\le r'+\varepsilon\le r+2\varepsilon$. Since $\hat M(\mu,\nu,t)$ is nondecreasing, we conclude that $r\le r_0$, whence $|r-r_0|<2\varepsilon$.
\end{proof}

\begin{propos} Let a sequence $(\mu_i)$ in $P(X)$ converge to $\mu\in P(X)$ with respect to the topology induced by the fuzzy metric $\hat M$. Then $(\mu_i)$ weakly* converges to $\mu$.
\end{propos}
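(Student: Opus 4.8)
The plan is to verify the portmanteau-type criterion recalled in Section~2: it suffices to prove that $\limsup_{i\to\infty}\mu_i(C)\le\mu(C)$ for every closed — and hence, by compactness of $X$, compact — subset $C$ of $X$. I would fix such a $C$ and fix once and for all a single value $t\in(0,\infty)$, say $t=1$, and work at that scale throughout.

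The first key step is a geometric lemma asserting that, for this fixed $t$, the fuzzy neighborhoods $C^{r,t}$ collapse onto $C$ as $r\to0^+$, i.e. $\bigcap_{r>0}C^{r,t}=C$. The inclusion $C\subseteq C^{r,t}$ is immediate since $M(x,x,t)=1>1-r$. For the reverse inclusion I would take $y\in C^{r,t}$ for every $r>0$, choose $x_r\in C$ with $M(x_r,y,t)>1-r$, pass via compactness of $C$ to a convergent subsequence $x_{r_k}\to x^{\ast}\in C$ with $r_k\to0$, and invoke joint continuity of $M$ (Proposition~1 of \cite{RR}) to get $M(x^{\ast},y,t)=1$, whence $x^{\ast}=y$ by property (ii) of Definition~\ref{o5}, so $y\in C$. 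Since $C^{r,t}$ is an open, hence Borel, set that decreases as $r\downarrow0$, continuity from above of the finite measure $\mu$ then yields $\lim_{r\to0^+}\mu(C^{r,t})=\mu(C)$.

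The second step links $\hat M$-convergence to these neighborhoods. Given $\varepsilon>0$, I would use the previous step to choose $r\in(0,\varepsilon)$ with $\mu(C^{r,t})<\mu(C)+\varepsilon$. Convergence of $(\mu_i)$ to $\mu$ in $\tau_{\hat M}$ means that $\mu_i$ eventually lies in every basic open ball $B(\mu,r,t)$, so there is $N$ with $\hat M(\mu_i,\mu,t)>1-r$ for all $i\ge N$; by the very definition of $\hat M$ this provides an $r'<r$ with $\mu_i(A)\le\mu(A^{r',t})+r'$ for every Borel $A$. Taking $A=C$, and using $C^{r',t}\subseteq C^{r,t}$ together with $r'<r<\varepsilon$, I obtain $\mu_i(C)\le\mu(C^{r,t})+r<\mu(C)+2\varepsilon$ for all $i\ge N$. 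Hence $\limsup_{i\to\infty}\mu_i(C)\le\mu(C)+2\varepsilon$, and letting $\varepsilon\to0$ completes the argument.

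The step I expect to be the main obstacle is the geometric lemma of the first paragraph, namely that the fuzzy $r$-neighborhoods of $C$ shrink exactly onto $C$ as $r\to0$. This is precisely where compactness of $X$ enters (to extract the convergent subsequence $x_{r_k}$) and where joint continuity of $M$, rather than continuity in $t$ alone, is genuinely needed. Once it is in place, the remainder is routine definition-chasing and a passage to the $\limsup$; the only point demanding care is that $t$ be held fixed throughout, so that the single-scale convergence furnished by the topology is all that is required.
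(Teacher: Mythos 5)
Your proposal is correct and follows essentially the same route as the paper: verify the portmanteau criterion $\overline{\lim}_{i}\mu_i(C)\le\mu(C)$ for closed $C$ by combining the defining inequalities of $\hat M$ with the fact that the enlargements $C^{r,t}$ shrink down to $C$. The only real difference is one of care rather than strategy: you fix a single $t$ and actually prove $\bigcap_{r>0}C^{r,t}=C$ (via compactness and joint continuity of $M$), whereas the paper works along a countable base $(r_i,t_i)$ of the uniformity and asserts the corresponding limit $\lim_i\mu(A^{r_i,t_i})=\mu(\bar A)$ without justification, so your write-up supplies a detail the paper glosses over.
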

\begin{proof} There exist $(r_i,t_i)\in (0,1)\times(0,\infty)$, $i\in\mathbb N$, such that the family $$\{\hat U_i=\{(\mu,\nu)\in P(X)\times P(X)\mid \hat M(\mu,\nu,t_i)>1-r_i\}\mid i\in\mathbb N\}$$ forms a countable decreasing base of the uniform structure in $P(X)$ generated by the fuzzy metric $\hat M$. Without loss the generality, one may assume that the family $$\{ U_i=\{(x,y)\in X\times X\mid  M(x,y,t_i)>1-r_i\}\mid i\in\mathbb N\}$$ forms a decreasing base of the uniform structure in $X$ generated by the fuzzy metric $ M$ (if necessary, we decrease $t_i$ and/or  $r_i$). We also assume that $r_i\to 0$ whenever $i\to\infty$.

Let $A$ be a Borel subset of $X$. Then $$\mu_i(A)\le \mu(A^{r_i,t_i})+r_i,\  \mu(A)\le \mu_i(A^{r_i,t_i})+r_i $$ and therefore $$\overline{\lim}_{i\to\infty}\mu_i(A)\le  \overline{\lim}_{i\to\infty}\mu(A^{r_i,t_i})+r_i=\lim_{i\to\infty}\mu(A^{r_i,t_i})=\mu(\bar A),$$
whence $\overline{\lim}_{i\to\infty}\mu_i(C)\le \mu(C)$, for any closed $C\subset X$. Hence, $(\mu_i)$ weakly* converges to $\mu$.

\end{proof}

\begin{lemma}\label{l:balls} For every $\delta\in (0,1)$ and every $t>0$ there exists a countable family open balls $\{B(x_i,r_i,t)\mid i\in\mathbb N)\}$ such that the following are satisfied:
\begin{enumerate}
\item $r_i<\delta$ for all $i$;
\item $\cup_{i=1}^\infty B(x_i,r_i,t)=X$;
\item $\mu(\partial B(x_i,r_i,t))=0$ for all $i$.
\end{enumerate}
\end{lemma}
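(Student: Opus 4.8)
The plan is to fix the given probability measure $\mu\in P(X)$ together with $\delta\in(0,1)$ and $t>0$, and to produce, for each point of $X$, an open ball of radius less than $\delta$ containing that point and having $\mu$-null boundary; compactness will then reduce the resulting open cover to a finite (hence countable) subfamily.

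First I would observe that, for a fixed center $x$ and the fixed parameter $t$, the ball $B(x,r,t)=\{y\in X\mid M(x,y,t)>1-r\}$ is precisely the superlevel set $\{y\mid f_x(y)>1-r\}$ of the function $f_x=M(x,\cdot,t)\colon X\to(0,1]$, which is continuous by \cite[Proposition 1]{RR}. Since $\{f_x>1-r\}$ is open and, by continuity, its closure is contained in $\{f_x\ge 1-r\}$, its boundary satisfies $\partial B(x,r,t)\subset\{y\mid f_x(y)=1-r\}$. The key point is that, as $r$ ranges over $(0,1)$, these level sets $\{f_x=1-r\}$ are pairwise disjoint; because $\mu$ is a finite measure, at most countably many of them can carry positive $\mu$-measure. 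Consequently, for every $x\in X$ there exists $r_x\in(0,\delta)$ with $\mu(\{y\mid f_x(y)=1-r_x\})=0$, and therefore $\mu(\partial B(x,r_x,t))=0$.

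Having chosen such an $r_x<\delta$ for each $x$, the family $\{B(x,r_x,t)\mid x\in X\}$ is an open cover of $X$, since $x\in B(x,r_x,t)$. As $X$ is compact, I would extract a finite subcover $B(x_1,r_1,t),\dots,B(x_n,r_n,t)$; reindexing it by $\mathbb N$ (repeating the last ball, or simply regarding a finite collection as countable) yields a family satisfying conditions $(1)$, $(2)$ and $(3)$ simultaneously.

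The only delicate step is the second paragraph: one must confirm that the boundary of the open ball lies in a single level set of $f_x$ (this is exactly where continuity of $M$ enters) and that disjointness of distinct level sets forces all but countably many of them to be $\mu$-null. Both facts are routine once the superlevel-set description is in hand, so I do not expect a serious obstacle here; the reduction to a finite family in the last step is a standard compactness argument.
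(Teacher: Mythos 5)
Your proof is correct, but it takes a different route from the paper's. Both arguments share the same key observation for condition (3): $\partial B(x,r,t)\subset\{y\mid M(x,y,t)=1-r\}$ by continuity of $M$, and since these level sets are pairwise disjoint in $r$ while $\mu$ is finite, all but countably many radii in $(0,\delta)$ give a $\mu$-null boundary. Where you diverge is in achieving the covering condition (2): you choose a good radius $r_x<\delta$ for \emph{every} point $x\in X$, note that $x\in B(x,r_x,t)$, and invoke compactness of $X$ to pass to a finite subcover. The paper instead fixes a countable dense subset $D=\{x_i\}$ in advance, chooses each $r_i$ inside $(\delta/2,\delta)$, and then argues via density that any $x$ lies in some $B(x_i,r,t')$ with $r<\delta/2$ and $t'<t$, hence in $B(x_i,r_i,t)$; this requires the extra bookkeeping that balls with smaller radius and smaller time parameter are nested in the chosen ones, and the symmetry $x_i\in B(x,r,t')\Leftrightarrow x\in B(x_i,r,t')$. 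Your compactness argument is cleaner and even yields a finite cover of all of $X$, which is slightly stronger than what the subsequent theorem needs (there the authors only extract finitely many balls carrying $\mu$-measure at least $1-\delta$); the paper's separability argument, on the other hand, does not use compactness at that step and would survive in a merely separable setting. Since the standing hypothesis in this section is that $X$ is compact, your proof is perfectly adequate here.
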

\begin{proof} Let $D=\{x_i\mid i\in\mathbb N\}$ be a countable dense subset of $X$. For every $x\in D$, $r\in(0,1)$, $t\in(0,\infty)$, let $S(x,r,t)=\{y\in X\mid M(x,y,t)=1-r\}$. Then, because of the continuity of $M$, we see that $S(x,r,t)\subset\partial B(x,r,t)$.

Since the family $\{S(x_i,r,t)\mid r\in(\delta/2,\delta)\}$ is disjoint, we see that there exists $r_i\in  (\delta/2,\delta)$ such that $\mu (S(x_i,r_i,t))=0$. Then also $\mu(\partial B(x_i,r_i,t))=0$.

Note that, for every $t_0\in (0,\infty)$ the family $$\{B(x,r,t')\mid x\in X,\ r\in(0,\delta/2),\
t'<t \}$$ forms a base of the topology in $X$. Thus, since $D$ is dense in $X$, for any $x\in X$ there is $i\in\mathbb N$, $r\in(0,\delta/2)$, $t'<t$ such that $x_i\in B(x,r,t')$. Then also $x\in B(x_i,r,t')\subset B(x_i,r_i,t)$. Therefore, (2) holds.

\end{proof}

\begin{theo} Suppose that $(\mu_i)$ weak* converges to $\mu$, for $\mu,\mu_i\in P(X)$. Then $(\mu_i)$  converges to $\mu$ in the topology induced by the fuzzy metric $\hat M$.
\end{theo}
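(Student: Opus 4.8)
The plan is to unwind the statement into the language of basic neighborhoods. Since the topology induced by $\hat M$ has as basic neighborhoods of $\mu$ the sets $\{\nu\in P(X)\mid \hat M(\mu,\nu,t)>1-r\}$, it suffices to fix an arbitrary pair $(r,t)\in(0,1)\times(0,\infty)$ and produce an index $N$ with $\hat M(\mu_i,\mu,t)>1-r$ for all $i\ge N$. By the very definition of $\hat M$, this reduces to finding some $r''<r$ and some $N$ such that, for all $i\ge N$ and every Borel $A\subset X$,
\begin{equation*}
\mu_i(A)\le \mu(A^{r'',t})+r''\quad\text{and}\quad \mu(A)\le \mu_i(A^{r'',t})+r''.
\end{equation*}
The essential idea is to replace the uncountable family of all Borel sets $A$ by a single \emph{finite} partition of $X$ into fuzzy-small pieces whose boundaries are $\mu$-negligible, on which the weak* hypothesis can be brought to bear.

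First I would fix $\delta\in(0,r/2)$ and apply Lemma \ref{l:balls} with this $\delta$ and with $t_0=t/2$, obtaining a countable cover of $X$ by balls $B(x_k,r_k,t_0)$ with $r_k<\delta$ and $\mu(\partial B(x_k,r_k,t_0))=0$. By compactness of $X$ I would extract a finite subcover $B_1,\dots,B_n$ and disjointify it in the standard way, setting $C_1=B_1$ and $C_j=B_j\setminus(B_1\cup\dots\cup B_{j-1})$; the $C_j$ are disjoint Borel sets covering $X$ with $\partial C_j\subset\bigcup_k\partial B_k$, so $\mu(\partial C_j)=0$ for every $j$. The key geometric estimate is that if $C_j\cap A\neq\emptyset$, then $C_j\subset A^{2\delta,t}$: indeed, for $y\in C_j\cap A$ and any $z\in C_j\subset B(x_{k_j},r_{k_j},t_0)$ the t-norm triangle inequality together with symmetry gives $M(z,y,2t_0)\ge M(x_{k_j},z,t_0)\ast M(x_{k_j},y,t_0)>(1-r_{k_j})\ast(1-r_{k_j})=1-2r_{k_j}>1-2\delta$, so $z\in B(y,2\delta,2t_0)=B(y,2\delta,t)\subset A^{2\delta,t}$, where I used $2t_0=t$ and the monotonicity of balls in the radius and in the time argument.

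With this finite partition in hand, I would invoke weak* convergence through its portmanteau characterization recalled in the Preliminaries. Since $\mu(\partial C_j)=0$, the estimates $\overline{\lim}_i\mu_i(C)\le\mu(C)$ for closed $C$ (equivalently $\underline{\lim}_i\mu_i(U)\ge\mu(U)$ for open $U$) applied to $\overline{C_j}$ and to the interior of $C_j$ yield $\lim_i\mu_i(C_j)=\mu(C_j)$ for each $j$. Hence, fixing $\beta\in(0,r)$ in advance, there is $N$ with $\sum_{j=1}^n|\mu_i(C_j)-\mu(C_j)|<\beta$ for all $i\ge N$. For an arbitrary Borel $A$ I would set $\tilde A=\bigcup\{C_j\mid C_j\cap A\neq\emptyset\}$; then $A\subset\tilde A\subset A^{2\delta,t}$, and summing over the finitely many relevant $j$ gives
\begin{equation*}
\mu_i(A)\le\mu_i(\tilde A)\le\mu(\tilde A)+\beta\le\mu(A^{2\delta,t})+\beta,
\end{equation*}
together with the symmetric inequality obtained by exchanging the roles of $\mu$ and $\mu_i$. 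Taking $r''=\max\{2\delta,\beta\}<r$ and using $A^{2\delta,t}\subset A^{r'',t}$ yields both Prokhorov bounds, whence $\hat M(\mu_i,\mu,t)\ge 1-r''>1-r$ for all $i\ge N$.

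I expect the main obstacle to be precisely the reduction from all Borel sets to the one fixed finite partition, that is, securing the uniform sandwich $A\subset\tilde A\subset A^{2\delta,t}$ simultaneously for every $A$ from a single collection $C_1,\dots,C_n$; this is what forces the careful bookkeeping of the two parameters (radius $2\delta<r$ and doubled time $2t_0=t$) in the t-norm triangle estimate and is what permits the finitely many applications of weak* convergence on the $C_j$ to control the infimum defining $\hat M$. By contrast, the convergence $\mu_i(C_j)\to\mu(C_j)$ for the null-boundary sets $C_j$ is the routine portmanteau step, immediate from the characterization of weak* convergence already recorded in the Preliminaries.
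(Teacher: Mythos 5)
Your proof is correct and follows essentially the same route as the paper's: cover $X$ by small balls with $\mu$-null boundaries via Lemma \ref{l:balls}, pass to a finite family, establish the sandwich $A\subset\tilde A\subset A^{2\delta,t}$ via the \L ukasiewicz triangle inequality at the halved time parameter $t/2$, and transfer weak* convergence through finitely many continuity sets. The only differences are cosmetic and harmless: you extract a genuine finite subcover and disjointify it, summing the $n$ errors $|\mu_i(C_j)-\mu(C_j)|$ over the disjoint pieces, whereas the paper keeps the balls overlapping, applies weak* convergence simultaneously to all $2^k$ unions of subfamilies, and separately bounds the mass of the uncovered remainder $X\setminus\bigcup_{j=1}^{k}B_j$.
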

\begin{proof} Let $t,\varepsilon>0$.

We want to show that there exists $N\in\mathbb N$ such that, for every $i\ge N$, $\hat M(\mu_i,\mu,t)<\varepsilon$.
The latter means that $$\mu(B)\le \mu_i(B^{\varepsilon,t})+\varepsilon,\ \mu_i(B)\le \mu(B^{\varepsilon,t})+\varepsilon,$$ for every Borel subset $A$  of $X$.

Let $\delta\in(0,\varepsilon/3)$. By Lemma \ref{l:balls}, there exists a collection of open balls $\{B_i= B(x_i,r_i,t/2)\mid i\in\mathbb N\}$ such that $r_i\in(0,\delta/2)$ such that $\cup_{j=1}^\infty B_j=X$ and $\mu(\partial B_j)=0$, for every $j$.

Clearly, there exists $k\in  \mathbb N$ such that $\mu(\cup_{j=1}^k B_j)>1-\delta$.

Consider the family $$\mathcal A=\left\{\cup_{j\in J}B_j\mid J\subset\{1,\dots,k\}\right\}.$$
Note that, for every $A\in\mathcal A$, since $\partial A\subset \cup_{j=1}^k\partial B_j$, we conclude  that $\mu(\partial A)=0$. Since $A$ is open and $(\mu_i)$ weakly* converges  to    $\mu$, we see that   $\lim_{i\to\infty}\mu_i(A)=\mu(A)$.

There exists $N\in\mathbb N$ such that  $|\mu_i(A)-\mu(A)|<\delta$, for all $i\ge N$ and for all $A\in\mathcal A$. Note that then $\mu_i(\cup_{j=1}^kB_j)\ge 1-2\delta$, for all $i\ge N$.

Given a Borel set $B$ in $X$, let $$A=\{B_j\mid B_j\cap A\neq\emptyset,\ j=1,\dots,k \}.$$

Then the following holds:
\begin{enumerate}
\item $A\subset B^{\delta,t}$;
\item $B\subset A\cup \left(X\setminus \cup_{j=1}^kB_j\right)$;
\item $|\mu_i(A)-\mu(A)|<\delta$;
\item $\mu \left(X\setminus \cup_{j=1}^k)B_j\right)\le \delta$, $\mu_i \left(X\setminus \cup_{j=1}^k)B_j\right)\le 2\delta$, for all $i>N$.
\end{enumerate}

Indeed, one has only to check (1). Suppose that $x\in A$,
then $x\in B_j=B(x_j,r_j,t/2)$, for some $j$, and there exists $y\in B_j\cap B$. We obtain $$M(x,y,t)\ge M(x,x_j,t/2)\ast M(x_j,y,t/2)>(1-r)\ast(1-r)\ge 1-2r,$$
whence $x\in B^{\delta,t}$.

Therefore, for every $i\ge N$, we have
\begin{align*} \mu(B)\le \mu(A)+\delta\le \mu_i(A)+2\delta \le \mu_i(B^{\delta,t})+2\delta\le \mu_i(B^{\varepsilon,t})+\varepsilon;\\
\mu_i(B)\le \mu_i(A)+2\delta\le \mu(A)+3\delta \le \mu_i(B^{\delta,t})+3\delta\le \mu_i(B^{\varepsilon,t})+\varepsilon.
\end{align*}

Since $B$ is an arbitrary Borel set, we conclude that $\hat M(\mu,\mu_i,t)\ge1-\varepsilon$, for all $i\ge N$.

\end{proof}

\begin{propos}
Let $f\colon X\to Y$ be a nonexpanding map of compact fuzzy metric spaces $(X,M,\ast)$ and $X',M',\ast)$. Then the induced map $P(f)\colon P(X)\to P(Y)$ is also nonexpanding.
\end{propos}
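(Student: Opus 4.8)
The plan is to show directly that, for every $t>0$, the infimum defining $\hat M'(P(f)(\mu),P(f)(\nu),t)$ is no larger than the infimum defining $\hat M(\mu,\nu,t)$; since each Prokhorov metric is obtained by subtracting the relevant infimum from $1$, this yields the desired inequality $\hat M'(P(f)(\mu),P(f)(\nu),t)\ge\hat M(\mu,\nu,t)$, i.e.\ that $P(f)$ is nonexpanding. Concretely, I would fix $\mu,\nu\in P(X)$, $t>0$, and an arbitrary $r\in(0,1)$ that is admissible for the pair $(\mu,\nu)$, i.e.\ one satisfying $\mu(A)\le\nu(A^{r,t})+r$ and $\nu(A)\le\mu(A^{r,t})+r$ for every Borel $A\subset X$. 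The goal is then to verify that the very same $r$ is admissible for the pair $(P(f)(\mu),P(f)(\nu))$ in $P(Y)$. Since admissibility of more parameters for the image pair means a smaller (or equal) infimum for the image, this suffices.

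The key step is a set-theoretic inclusion linking preimages under $f$ with the thickening operation $(-)^{r,t}$. For a Borel set $B\subset Y$, I claim that
$$(f^{-1}(B))^{r,t}\subset f^{-1}(B^{r,t}),$$
where the left-hand thickening is computed in $X$ (using $M$) and the right-hand one in $Y$ (using $M'$). To see this, let $z\in(f^{-1}(B))^{r,t}$, so that $M(x,z,t)>1-r$ for some $x\in f^{-1}(B)$; then $f(x)\in B$, and the nonexpanding inequality $M'(f(x),f(z),t)\ge M(x,z,t)>1-r$ shows that $f(z)\in B(f(x),r,t)\subset B^{r,t}$, hence $z\in f^{-1}(B^{r,t})$.

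Granting this inclusion, I would finish by a short computation using the identity $P(f)(\lambda)(B)=\lambda(f^{-1}(B))$. Writing $A=f^{-1}(B)$ for a Borel $B\subset Y$, the inclusion gives $\nu(A^{r,t})\le\nu(f^{-1}(B^{r,t}))=P(f)(\nu)(B^{r,t})$, so that
$$P(f)(\mu)(B)=\mu(A)\le\nu(A^{r,t})+r\le P(f)(\nu)(B^{r,t})+r,$$
and symmetrically $P(f)(\nu)(B)\le P(f)(\mu)(B^{r,t})+r$. As $B$ ranges over all Borel subsets of $Y$, this shows that $r$ is admissible for $(P(f)(\mu),P(f)(\nu))$, completing the argument.

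I do not expect a genuine obstacle: the whole proof reduces to the single inclusion above, which is an immediate consequence of the defining inequality of a nonexpanding map together with the definition of the open balls $B(x,r,t)$. The only points requiring minor care are the bookkeeping of which fuzzy metric ($M$ or $M'$) each thickening and each ball is computed with, and the observation that $f^{-1}(B)$ is Borel (automatic, since $f$ is continuous), so that $A=f^{-1}(B)$ is a legitimate test set for the admissibility condition on $X$.
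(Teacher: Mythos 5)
Your proposal is correct and follows essentially the same route as the paper: the whole argument rests on the inclusion $(f^{-1}(B))^{r,t}\subset f^{-1}(B^{r,t})$, which the paper derives from $B(x,r,t)\subset f^{-1}(B'(f(x),r,t))$ and you prove directly from the nonexpanding inequality, followed by the same chain of estimates showing every admissible $r$ for $(\mu,\nu)$ is admissible for $(P(f)(\mu),P(f)(\nu))$. Your phrasing in terms of admissible parameters and the comparison of infima is, if anything, slightly more careful about the logical bookkeeping than the paper's, but there is no substantive difference.
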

\begin{proof}  Note that, since the map $f$ is nonexpanding, $B(x,r,t)\subset f^{-1}(B'(f(x),r,t))$, for every $x\in X$, $r\in(0,1)$ and $t\in(0,\infty)$. Therefore, for any $A\subset X'$ we have $f^{-1}(A)^{r,t}\subset f^{-1}(A^{r,t})$.

Let $\mu,\nu\in P(X)$ and let $A$ be a Borel subset of $X'$. If $\hat M(\mu,\nu,t)>1-r$, then $$P(f)(\mu)(A) =\mu(f^{-1}(A))\le \nu(f^{-1}(A)^{r,t})+r\le \nu(f^{-1}(A^{r,t}))+r=P(f)(\nu)(A^{r,t})+r $$ and similarly $P(f)(\nu)(A)\le P(f)(\mu)(A^{r,t})+r $, whence  $\hat M'(P(f)(\mu),P(f)(\nu),t)>1-r$ and the map $P(f)$ is nonexpanding.

\end{proof}

Therefore we obtain the probability measure functor $P$ acting in the category $\mathcal{CFMS}(*)$.

\begin{propos}\label{p:iso} For any compact fuzzy metric space $X$, the map $x\mapsto \delta_x\colon
 X\to P( X) $ is an isometric embedding.
\end{propos}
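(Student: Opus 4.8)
The plan is to prove the single identity $\hat M(\delta_x,\delta_y,t)=M(x,y,t)$ for all $x,y\in X$ and all $t>0$; this is precisely the statement that $x\mapsto\delta_x$ is isometric, and injectivity (hence the embedding property) then follows at once from condition (ii) of Definition \ref{o5}, since $\hat M(\delta_x,\delta_y,t)=1$ for all $t$ forces $M(x,y,t)=1$ and thus $x=y$.

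First I would unwind the two families of inequalities defining $\hat M$ in the special case of Dirac measures. Recall that $\delta_x(A)=1$ when $x\in A$ and $\delta_x(A)=0$ otherwise. Consequently the inequality $\delta_x(A)\le\delta_y(A^{r,t})+r$ is automatically satisfied whenever $x\notin A$, because its left-hand side vanishes; and when $x\in A$ it reads $\delta_y(A^{r,t})+r\ge 1$, i.e. $\delta_y(A^{r,t})\ge 1-r>0$. Since $\delta_y$ takes only the values $0$ and $1$ and $1-r>0$, this is equivalent to $y\in A^{r,t}$.

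The key step is to collapse the whole family of conditions (ranging over Borel $A$) to a single one. Among all Borel sets containing $x$ the singleton $\{x\}$ is the smallest, and by definition $\{x\}^{r,t}=B(x,r,t)$. Since $A\subset A'$ clearly implies $A^{r,t}\subset (A')^{r,t}$, we have $B(x,r,t)\subset A^{r,t}$ for every Borel $A$ with $x\in A$. Hence the requirement ``$y\in A^{r,t}$ for every Borel $A\ni x$'' holds if and only if $y\in B(x,r,t)$, that is, if and only if $M(x,y,t)>1-r$. By the symmetry $M(x,y,t)=M(y,x,t)$ the companion inequalities $\delta_y(A)\le\delta_x(A^{r,t})+r$ reduce to exactly the same condition, so both families together are equivalent to $M(x,y,t)>1-r$.

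It then remains only to evaluate the infimum. We obtain
$$\hat M(\delta_x,\delta_y,t)=1-\inf\{r\in(0,1)\mid M(x,y,t)>1-r\}=1-\inf\{r\in(0,1)\mid r>1-M(x,y,t)\}.$$
Since $1-M(x,y,t)\in[0,1)$, the displayed infimum equals $1-M(x,y,t)$ (this covers both the case $x=y$, where the infimum is $0$, and the case $x\neq y$), and therefore $\hat M(\delta_x,\delta_y,t)=M(x,y,t)$, as desired. I do not expect any genuine obstacle here: the only point requiring care is the reduction to singletons, where one must check both that the inequalities are vacuous off $\{x\}$ and that the singleton is the extremal Borel set, using the monotonicity of $A\mapsto A^{r,t}$.
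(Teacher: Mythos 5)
Your proof is correct and follows essentially the same route as the paper's: both reduce the family of Borel-set conditions to the singleton $\{x\}$, using $\{x\}^{r,t}=B(x,r,t)$ and the monotonicity of $A\mapsto A^{r,t}$, to identify the admissible radii as exactly those $r$ with $M(x,y,t)>1-r$. Your packaging of this as a single equivalence followed by an evaluation of the infimum is just a streamlined version of the paper's two one-sided estimates for $r<r_0$ and $r>r_0$.
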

\begin{proof} Let $x,y\in X$, $t\in(0,\infty)$ and $M(x,y,t)=1-r_0$. Note that then $y\notin B(x,r_0,t)=\{x\}^{r_0,t}$. Then for every $r\in(0,1)$, $r<r_0$, we have $y\notin B(x,r,t)$, whence $$\delta_x(\{x\}) =1>\delta_y(\{x\}^{r,t})+r=\delta_y(B(x,r,t))+r =0+r=r.$$ Therefore, $\hat M(\delta_x,\delta_y,t)\ge 1-r_0$.

Let $r\in(0,1)$, $r>r_0$. If $A\neq\emptyset$ is a Borel subset of $X$ with $x\in A$, then $y\in B(x,r,t)\subset A^{r,t}$ and $$1=\delta_x(A)\le \delta_y(A^{r,t})+r.$$ Similarly, $\delta_y(A)\le \delta_x(A^{r,t})+r$. We conclude that $\hat M(\delta_x,\delta_y,t)\le 1-r_0$.

\end{proof}

Thus, the identity functor on the category $\mathcal{CFMS}(*)$ is a subfunctor of the probability measure functor $P$.

\section{Extension of fuzzy metrics}\label{s:ext}

In this section we provide an application of the Prokhorov fuzzy metric to the problem of extensions of fuzzy metrics.
\begin{theo} Let $Y$ be a nonempty closed subset of a compact metrizable space $X$. Then any fuzzy metric on $Y$ can be extended over $X$.
\end{theo}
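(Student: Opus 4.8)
The plan is to realize $N$ as the restriction of a fuzzy metric pulled back from $P(Y)$ along an extension of the canonical embedding, and then to correct it so that it separates points of $X$ without altering its values on $Y$. First I would form the Prokhorov fuzzy metric $\hat N$ on $P(Y)$ (the set $Y$ being compact, as a closed subset of the compact space $X$). By Proposition \ref{p:iso} the map $\delta_Y\colon Y\to P(Y)$, $y\mapsto\delta_y$, is an isometric embedding, so $\hat N(\delta_y,\delta_{y'},t)=N(y,y',t)$. Since $P(Y)$ is a compact convex subset of the locally convex space $C(Y)^{*}$ with the weak* topology, it is an absolute retract and hence an absolute extensor for the class of metrizable spaces; as $Y$ is closed in the metrizable space $X$, the embedding $\delta_Y$ extends to a continuous map $g\colon X\to P(Y)$ with $g|_Y=\delta_Y$. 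Pulling back yields $M_1(x,x',t)=\hat N(g(x),g(x'),t)$, a fuzzy pseudometric on $X$ (every axiom except possibly (ii) passes through the continuous map $g$, using the triangle inequality for $\hat N$ and the monotonicity of $\ast$), and by the previous displayed equality $M_1|_{Y\times Y\times(0,\infty)}=N$.

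The map $g$ need not be injective, so $M_1$ may fail to separate points. To repair this without disturbing the values on $Y$, I would collapse $Y$ to a point: let $q\colon X\to X/Y$ be the quotient map onto the compact metrizable space $X/Y$, choose a compatible metric $\rho$ on $X/Y$, and set $M_2(x,x',t)=\frac{t}{t+\rho(q(x),q(x'))}$, the pullback of the standard fuzzy metric. Because $\max\{a+b-1,0\}\le ab$ on $[0,1]^2$, the standard fuzzy metric is a fuzzy metric for the \L ukasiewicz norm, so $M_2$ is a fuzzy pseudometric on $X$; moreover $M_2(x,x',t)=1$ exactly when $q(x)=q(x')$, that is, when $x=x'$ or $x,x'\in Y$. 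Thus $M_2$ is identically $1$ on $Y\times Y\times(0,\infty)$ and separates every pair of distinct points not both lying in $Y$.

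Finally I would set $\tilde M(x,x',t)=M_1(x,x',t)\ast M_2(x,x',t)$. Associativity, commutativity and monotonicity of $\ast$ make $\tilde M$ a fuzzy pseudometric, and since $a\ast b=1$ if and only if $a=b=1$ for the \L ukasiewicz norm, the separation axiom (ii) holds: for $x,x'\in Y$ one has $\tilde M(x,x',t)=N(x,x',t)\ast 1=N(x,x',t)$, which equals $1$ only when $x=x'$, while for pairs not both in $Y$ the inequality $\tilde M\le M_2$ forces $\tilde M<1$ unless $x=x'$. Consequently $\tilde M|_{Y\times Y\times(0,\infty)}=N$, and continuity of $\tilde M$ in $t$ is immediate. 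To see that $\tilde M$ induces the original topology, note that $\tilde M$ is continuous, whence $\tau_{\tilde M}\subseteq\tau_X$, and that $\tilde M$ separates points, whence $\tau_{\tilde M}$ is Hausdorff; the identity map $(X,\tau_X)\to(X,\tau_{\tilde M})$ is then a continuous bijection of a compact space onto a Hausdorff space, hence a homeomorphism, so $\tau_{\tilde M}=\tau_X$.

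The substantive point, and the step I expect to be the main obstacle, is precisely the tension between separating all points of $X$ and leaving the prescribed values on $Y$ untouched, since a generic point-separating fuzzy metric on $X$ would overwrite $N$. The device of collapsing $Y$, which produces a separating fuzzy pseudometric that is trivial on $Y$, is what resolves this; the remaining inputs, namely the absolute-extensor property of $P(Y)$ and the isometry $Y\hookrightarrow P(Y)$, are supplied by standard infinite-dimensional topology and by the earlier results of the paper.
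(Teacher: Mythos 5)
Your overall architecture is genuinely different from the paper's: the paper chooses $F\colon X\to P(Y)$ to be an \emph{embedding} extending $y\mapsto\delta_y$ (using that $P(Y)$ is a Hilbert cube when $Y$ is infinite, with a separate trick for finite $Y$), so injectivity of the pullback is automatic; you instead take only a continuous extension $g$ (which needs just the AR property of $P(Y)$ and works uniformly, with no case distinction for finite $Y$) and then try to restore the separation axiom by combining with a second pseudometric that collapses $Y$. That idea is attractive, but the combination step as written fails.

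The gap is in the definition $\tilde M=M_1\ast M_2$ with $\ast$ the \L ukasiewicz norm: this operation does not preserve positivity, so $\tilde M$ violates conditions (i) and (v) of Definition \ref{o5}, which require $M(x,y,t)>0$ and $M(x,y,-)\colon(0,\infty)\to(0,1]$. Concretely, $a\ast b=\max\{a+b-1,0\}=0$ whenever $a+b\le 1$, and both factors can simultaneously be small: $M_2(x,x',t)=t/(t+\rho(q(x),q(x')))\to 0$ as $t\to 0^{+}$ for any pair with $q(x)\neq q(x')$, while $M_1(x,x',t)$ is nonincreasing in $t$ and need not be close to $1$; hence $\tilde M(x,x',t)=0$ for all sufficiently small $t$. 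So $\tilde M$ is not a fuzzy metric in the George--Veeramani sense. The repair is easy and keeps your strategy intact: replace $M_1\ast M_2$ by $\min\{M_1,M_2\}$. The minimum of two positive values is positive; the $\ast$-triangle inequality survives because $\min\{a_1,a_2\}\ast\min\{b_1,b_2\}\le a_i\ast b_i\le M_i(x,z,t+s)$ for $i=1,2$ by monotonicity of $\ast$; one still has $\min\{a,b\}=1$ iff $a=b=1$, so your separation argument goes through verbatim; and on $Y\times Y\times(0,\infty)$ one gets $\min\{N,1\}=N$. With that single substitution (and the corresponding trivial changes in the continuity and topology-comparison paragraphs, which are otherwise correct) your proof is complete and, arguably, more self-contained than the paper's, since it does not invoke unknotting of embeddings in the Hilbert cube.
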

\begin{proof} Let $M$ be a fuzzy metric on $Y$. without loss of generality, one may assume that $Y$ is infinite (otherwise, one can multiply $Y$ by an infinite compact fuzzy metric space, say $[0,1]$ and attach $X$ to $Y\times[0,1]$ by identifying every $y\in Y$ and $(y,0)\in Y\times[0,1]$.)

Then the space $P(Y)$ is homeomorphic to the Hilbert cube  and from the results of infinite-dimensional topology of the Hilbert cube it easily follows that there exists an embedding $F\colon X\to P(Y)$ such that $F(y)=\delta_y$, for every $y\in Y$ (see  e.g.  \cite{F}).

Define $M'\colon X\times X\times(0,\infty)\to\mathbb R$ by the formula $M'(x,y,t)=\hat M(F(x),F(y),t)$. It follows from Proposition \ref{p:iso} that $M'$ is an extension of $M$.
\end{proof}

\section*{Epilogue}

A measure $\mu$ on $X$ is said to be a {\em subprobability measure } if there is a probability measure $\mu'$ on $X$ such that $\mu(A)\le\mu'(A)$, for every Borel set $A$ of $X$. The set of all subprobability measures on $X$ can be identified with the subspace of $X\cup\{1\}$, where $1=\{0\}$ stands for a terminal object in the category of metrizable spaces. Given a fuzzy metric $M$ on a compact metrizable space $X$, there exists the unique fuzzy metric $$M'\colon (X\cup\{1\})\times (X\cup\{1\})\times (0,\infty)\to\mathbb R$$ that extends $M$ and such that  $M'(x,0,t)=
\frac12$, for every $x\in X$. (Indeed, since, for every $x,y\in X$ and every $t,s\in(0,\infty)$, we have $$M(x,y,t )\ast M(y,0,s)= M(x,y,t )\ast \frac12=\max \{M(x,y,t )-\frac12,0\}\le \frac12= M(x,0,t+s),$$ the fuzzy metric $M'$ is well-defined.)

The set $P'(X)$ of subprobability measures on $X$ then can be interpreted as the set $P(X\cup\{0\})$.

The following questions remains open.

\begin{que} Are there analogues of the above results for fuzzy metric spaces which are not necessarily compact?
\end{que}

\begin{que} Is there a fuzzy analogue of the Kantorovich metric in the set of probability measures?
\end{que}

See e.g.  \cite{R} for the definitions and properties of the Kantorovich metric.
\begin{que} Is there a fuzzy analogue of the Prokhorov metric in the set of probability measures for another choice of t-norm?
\end{que}

\begin{que} Is the canonical map $\psi\colon (P^2(X),\hat{\hat M})\to (P(X),\hat M)$ nonexpanding?
\end{que}

The corresponding question for the metric spaces and nonexpanding maps is discussed in \cite{B}.

\section*{Acknowledgements}

This research was supported by the
Slovenian Research Agency grants P1-0292-0101 and
M-2057-0101.
 We thank the referees for comments and suggestions.

\end{document}